
\documentclass[10pt,twoside]{siamltex}
\usepackage{amsfonts,epsfig}
 \usepackage{graphics}
 \usepackage{amssymb}
\setlength{\textheight}{190mm}
\setlength{\textwidth}{130mm}
\topmargin = 20mm


\setlength{\parskip}{.1in}



\newtheorem{example}[theorem]{Example}



\begin{document}



\bibliographystyle{plain}
\title{A new improved error bound for linear complementarity problems for $B$-matrices\thanks{Received by the editors on
Month x, 201x. Accepted for publication on Month y, 201y   Handling
Editor: .}}

\author{
Lei Gao\thanks{School of Mathematics and Information Science, Baoji
University of Arts and Sciences, Baoji, Shannxi, 721007, P. R. China
(gaolei@bjwlxy.edu.cn).}
\and Chaoqian Li\thanks{School of Mathematics and Statistics, Yunnan
University, Kunming, Yunnan, 650091, P.R. China
(lichaoqian@ynu.edu.cn).} }

\pagestyle{myheadings} \markboth{L. GAO, C.Q. LI}{A new improved
error bound for linear complementarity problems for $B$-matrices}
\maketitle

\begin{abstract}
A new error bound for the linear complementarity problem when the
matrix involved is a $B$-matrix is presented, which improves the
corresponding result  in [C.Q. Li et al., A new error bound for
linear complementarity problems for $B$-matrices. Electron. J.
Linear Al., 31:476-484, 2016]. In addition some sufficient
conditions such that the new bound is sharper than that in [M.
Garc\'{i}a-Esnaola and J.M. Pe\~{n}a. Error bounds for linear
complementarity problems for $B$-matrices. Appl. Math. Lett.,
22:1071-1075, 2009] are provided.
\end{abstract}

\begin{keywords}
Error bound, Linear complementarity problem, $B$-matrix
\end{keywords}
\begin{AMS}
15A48, 65G50, 90C31, 90C33
\end{AMS}

\section{Introduction} \label{intro-sec}
Given an $n\times n$ real matrix $M$ and $q\in R^n$, the linear
complementarity problem (LCP) is to find a vector $x\in R^{n}$
satisfying
\begin{equation}\label{eq1}
x\geqslant 0, Mx+q\geqslant 0, (Mx+q)^Tx=0 \end{equation} or to show
that no such vector $x$ exists. We denote this problem (\ref{eq1})
by LCP$(M, q)$. The LCP$(M, q)$ arises in many applications such as
finding Nash equilibrium point of a bimatrix game, the network
equilibrium problems, the contact problems and the free boundary
problems for journal bearing etc, for details, see \cite{Ber,Co,Mu}.

It is well-known that the LCP$(M, q)$ has a unique solution for any
vector $q\in R^n$ if and only if $M$ is a $P$-matrix \cite{Co}. Here
a matrix $M$ is called a $P$-matrix if all its principal minors are
positive. For the LCP$(M, q)$, one of the interesting problems is to
estimate
\begin{equation}\label{e_b}\max\limits_{d\in [0,1]^n}||(I -D+DM)^{-1}||_{\infty},\end{equation} which
can be used to bound the error $||x-x^*||_{\infty}$ \cite{Chen},
that is,
\[ ||x-x^*||_{\infty} \leqslant \max\limits_{d\in [0,1]^n}||(I -D+DM)^{-1}||_{\infty} ||r(x)||_\infty,\]
where $x^*$ is the solution of the LCP$(M, q)$, $r(x)=\min\{
x,Mx+q\}$, $D=diag(d_i)$ with $0\leqslant d_i \leqslant 1$ for each
$i\in N$, $d=[d_1,d_2,...,d_n]^{T}\in [0,1]^n$, and the min operator
$r(x)$ denotes the componentwise minimum of two vectors.

When the matrix $M$ for the LCP$(M, q)$ belongs to $P$-matrices or
some subclass of $P$-matrices, various bounds for (\ref{e_b}) were
proposed, e.g., see
\cite{Che,Chen0,Chen,Dai,Dai1,Dai2,Ga,Ga0,Ga1,Ga2,Li} and references
therein. Recently, Garc\'{i}a-Esnaola and  Pe\~na  in \cite{Ga}
provided an upper bound for (\ref{e_b}) when $M$ is a $B$-matrix as
a subclass of $P$-matrices. Here, a matrix $M=[m_{ij}]\in R^{n, n}$
is called a $B$-matrix \cite{Jp} if for each $i\in
N=\{1,2,\ldots,n\}$,
\begin{eqnarray*}
\sum\limits_{k\in N}m_{ik}>0, ~and
~\frac{1}{n}\left(\sum\limits_{k\in N}m_{ik}\right)>m_{ij} ~~for~
any~ j\in N ~and~ j\neq i.\end{eqnarray*}

\begin{theorem}\emph{\cite[Theorem 2.2]{Ga}} \label{G-B}  Let $M=[m_{ij}]\in R^{n, n}$
be a $B$-matrix with the form
\begin{eqnarray}\label{eq3}
M=B^{+}+C,
\end{eqnarray}
where
\begin{eqnarray}\label{eq4} B^+ =[b_{ij}]= \left[ \begin{array}{ccc}
 m_{11}-r_1^+    &\cdots     &m_{1n}-r_1^+ \\
 \vdots          &          &\vdots  \\
 m_{n1}-r_n^+     &\cdots    &m_{nn}-r_n^+
\end{array} \right], ~C=\left[ \begin{array}{ccc}
 r_1^+    &\cdots     &r_1^+ \\
 \vdots          &          &\vdots  \\
r_n^+     &\cdots    &r_n^+
\end{array} \right], \end{eqnarray}
and $r_i^+=\max\{ 0,m_{ij}|j\neq i\}$. Then
\begin{equation} \label{G-bound}\max\limits_{d\in [0,1]^n}||(I-D+DM)^{-1}||_{\infty}
\leqslant \frac{n-1}{\min\{\beta,1\}},\end{equation} where
$\beta=\min\limits_{i\in N}\{\beta_i\}$ and
$\beta_i=b_{ii}-\sum\limits_{j\neq i}|b_{ij}|$.
\end{theorem}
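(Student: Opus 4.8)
The plan is to exploit the additive splitting $M = B^+ + C$ and to carry it through the perturbed matrix $M_D := I - D + DM$. First I would observe that $B^+$ is a $Z$-matrix, since its off-diagonal entries $b_{ij} = m_{ij} - r_i^+ \le 0$, and that its $i$-th row sum equals $\sum_{j} m_{ij} - n r_i^+$, which is positive by the very definition of a $B$-matrix (because $\frac{1}{n}\sum_k m_{ik} > r_i^+$). Hence $B^+$ is strictly diagonally dominant by rows and $\beta_i = b_{ii} - \sum_{j\ne i}|b_{ij}|$ equals exactly that positive row sum. Writing $M_D = \bar B + \bar C$ with $\bar B := I - D + DB^+$ and $\bar C := DC$, I would check that $\bar B$ inherits the good structure: it is again a $Z$-matrix, and a short computation gives its $i$-th row dominance as $1 - d_i + d_i\beta_i = 1 - d_i(1-\beta_i)$, which is $\ge \min\{1,\beta_i\} \ge \min\{1,\beta\} > 0$ for every $d \in [0,1]^n$. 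Thus $\bar B$ is a nonsingular $M$-matrix, so $\bar B^{-1} \ge 0$, and the classical SDD estimate (Varah) yields $\|\bar B^{-1}\|_\infty \le 1/\min_i\big(1-d_i(1-\beta_i)\big) \le 1/\min\{1,\beta\}$. Meanwhile $\bar C = DC$ has each row constant and nonnegative, i.e. $\bar C = c\,e^T$ with $c = (d_1 r_1^+,\dots,d_n r_n^+)^T \ge 0$ and $e = (1,\dots,1)^T$.

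The second ingredient is the factorization $M_D = \bar B\,(I + \bar B^{-1}\bar C)$, which gives $M_D^{-1} = (I + \bar B^{-1}\bar C)^{-1}\bar B^{-1}$ and hence, by submultiplicativity of $\|\cdot\|_\infty$,
\[
\|M_D^{-1}\|_\infty \;\le\; \|(I + \bar B^{-1}\bar C)^{-1}\|_\infty\;\|\bar B^{-1}\|_\infty .
\]
Since $\bar B^{-1}\bar C = (\bar B^{-1}c)\,e^T =: p\,e^T$ is rank one with $p = \bar B^{-1}c \ge 0$, I would invert it by the Sherman--Morrison formula: $(I + p\,e^T)^{-1} = I - (1+\delta)^{-1} p\,e^T$, where $\delta = e^T p = \sum_l p_l \ge 0$. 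Reading off the entries, the $i$-th absolute row sum is $\big|1 - p_i/(1+\delta)\big| + (n-1)\,p_i/(1+\delta)$; because $0 \le p_i/(1+\delta) \le 1$ (immediate from $\delta \ge p_i$) the absolute value opens up to give exactly $1 + (n-2)\,p_i/(1+\delta) \le n-1$. Combining the two factors above then produces the claimed bound $\frac{n-1}{\min\{\beta,1\}}$, uniformly in $d$.

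The step I expect to carry the real weight is the rank-one estimate $\|(I + \bar B^{-1}\bar C)^{-1}\|_\infty \le n-1$. Everything there hinges on the two structural facts secured earlier: that $\bar B$ is an $M$-matrix, so that $p = \bar B^{-1}c \ge 0$ (this is what lets me drop the absolute value on the diagonal term and keep all off-diagonal contributions with a single sign), and that the ratios $p_i/(1+\delta)$ never exceed $1$. The factor $n-1$ is genuinely the number of off-diagonal entries in each row of the rank-one correction, and it is sharp: taking $\bar B = I$ and letting one component of $c$ grow drives the corresponding absolute row sum to $n-1$. The only other point needing care is to verify the positivity of the diagonal of $\bar B$ and the row-dominance formula $1 - d_i(1-\beta_i)$ \emph{uniformly} for $d \in [0,1]^n$, so that $\min\{\beta,1\}$ rather than a $d$-dependent quantity survives the maximization over $d$.
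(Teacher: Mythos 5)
Your proposal is correct and follows essentially the same route as the paper (which quotes this result from Garc\'{i}a-Esnaola and Pe\~{n}a and reuses its skeleton in the proof of Theorem \ref{GL-mian}): split $M_D=I-D+DM$ as $(I-D+DB^{+})+DC$, show the first summand is an $SDD$ $M$-matrix uniformly in $d$, bound its inverse by the Varah/SDD estimate $1/\min\{\beta,1\}$, and bound the rank-one correction factor by $n-1$ via the factorization $M_D^{-1}=(I+\bar B^{-1}\bar C)^{-1}\bar B^{-1}$. Your Sherman--Morrison computation is simply a self-contained derivation of the $\|(I+\bar B^{-1}\bar C)^{-1}\|_\infty\leqslant n-1$ step that the paper imports as a known inequality, so the two arguments coincide in substance.
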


It is not difficult to see that the bound (\ref{G-bound}) will be
inaccurate when the matrix $M$ has very small value of
$\min\limits_{i\in N}\{b_{ii}-\sum\limits_{j\neq i}|b_{ij}|\}$, for
details, see \cite{Li1,Li2}. To conquer this problem, Li et al., in
\cite{Li3} gave the following bound for (\ref{e_b}) when $M$ is a
$B$-matrix, which improves those provided by Li and Li in
\cite{Li1,Li2}.

\begin{theorem}\emph{\cite[Theorem 2.4]{Li3}} \label{L-C}Let $M=[m_{ij}]\in R^{n, n}$
be a $B$-matrix with the form $M=B^{+}+C$, where $B^{+}=[b_{ij}]$ is
the matrix of (4). Then
\begin{eqnarray}\label{lc-bound} \max\limits_{d\in [0,1]^n}||(I-D+DM)^{-1}||_{\infty} \leqslant
\sum\limits_{i=1}^{n}\frac{n-1}{\min\{\bar{\beta}_i,1\}}\prod\limits_{j=1}^{i-1}\frac{b_{jj}}{\bar{\beta}_j},
\end{eqnarray}
where
$\bar{\beta}_i=b_{ii}-\sum\limits_{j=i+1}^{n}|b_{ij}|l_i(B^{+})$
with $l_k(B^{+})=\max\limits_{k\leq i\leq
n}\left\{\frac{1}{|b_{ii}|}\sum\limits_{j=k,\atop j\neq
i}^{n}|b_{ij}|\right\}$, and
$\prod\limits_{j=1}^{i-1}\frac{b_{jj}}{\bar{\beta}_j}=1$ if $i=1$.
\end{theorem}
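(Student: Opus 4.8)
The plan is to bound $\|M_D^{-1}\|_\infty$ uniformly for $M_D:=I-D+DM$ with $D=\mathrm{diag}(d_i)$, $d_i\in[0,1]$, and then pass to the maximum over $d\in[0,1]^n$. The starting point is the splitting inherited from (\ref{eq3})--(\ref{eq4}): one checks directly that $M_D=\bar B+\bar C$ with $\bar B=I-D+DB^{+}$ and $\bar C=DC$. The off-diagonal entries of $\bar B$ are $\bar b_{ij}=d_ib_{ij}\le 0$ and its diagonal entries are $\bar b_{ii}=1-d_i+d_ib_{ii}>0$; since its $i$-th row dominance sum equals $\bar b_{ii}-\sum_{j\ne i}|\bar b_{ij}|=1-d_i(1-\beta_i)>0$, the matrix $\bar B$ is a strictly diagonally dominant $Z$-matrix, hence a nonsingular $M$-matrix with $\bar B^{-1}\ge 0$. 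Moreover $\bar C=\bar c\,\mathbf 1^{T}$ is a nonnegative rank-one matrix with constant rows, where $\bar c=(d_1r_1^{+},\dots,d_nr_n^{+})^{T}\ge 0$.

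The technical heart is a refined bound for $\|\bar B^{-1}\|_\infty$ that sharpens the classical estimate $1/\min_i(\bar b_{ii}-\sum_{j\ne i}|\bar b_{ij}|)$ in the regime where this quantity is tiny. I would establish, by induction on the order (peeling off the first row and column and controlling the resulting Schur complement through the trailing partial-dominance ratios), the inequality
\[
\|\bar B^{-1}\|_\infty\le\sum_{i=1}^{n}\frac{1}{\bar\gamma_i}\prod_{j=1}^{i-1}\frac{\bar b_{jj}}{\bar\gamma_j},\qquad \bar\gamma_i:=\bar b_{ii}-\sum_{j=i+1}^{n}|\bar b_{ij}|\,l_i(\bar B),
\]
where $l_k(\bar B)$ is defined exactly as $l_k(B^{+})$ but for the matrix $\bar B$.

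Next I would remove the dependence on $D$ by two monotonicity estimates, both exploiting that $B^{+}$ is strictly diagonally dominant, so $l_k(B^{+})<1$. From $\bar b_{ij}=d_ib_{ij}$, $\bar b_{ii}=1-d_i+d_ib_{ii}$ and $d_i\le 1$ one gets $l_k(\bar B)\le l_k(B^{+})$ for every $k$, whence $\bar\gamma_i\ge 1-d_i(1-\bar\beta_i)\ge\min\{\bar\beta_i,1\}$ and, using $b_{jj}\ge\bar\beta_j$, also $\bar b_{jj}/\bar\gamma_j\le b_{jj}/\bar\beta_j$. Substituting these into the displayed bound yields, uniformly in $d\in[0,1]^n$,
\[
\|\bar B^{-1}\|_\infty\le\sum_{i=1}^{n}\frac{1}{\min\{\bar\beta_i,1\}}\prod_{j=1}^{i-1}\frac{b_{jj}}{\bar\beta_j}.
\]
Finally I would reduce $M_D$ to its $M$-matrix part by adapting the estimate underlying Theorem \ref{G-B}: using $\bar B^{-1}\ge 0$, $\bar c\ge 0$, and the constant-row structure of $\bar C=\bar c\,\mathbf 1^{T}$, the $B$-matrix argument of \cite{Ga} supplies a factor $n-1$ (rather than $n$) when passing from $M_D$ to $\bar B$, giving $\|M_D^{-1}\|_\infty\le(n-1)\,\|\bar B^{-1}\|_\infty$. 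Combining the last two displays and taking the maximum over $d\in[0,1]^n$ then gives (\ref{lc-bound}).

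I expect the main obstacle to be the interaction between the refined inverse estimate and the $D$-scaling: the telescoping product, the partial-dominance ratios $l_k$, and the diagonal factors $\bar b_{jj}$ must each be dominated by their $D$-free counterparts \emph{simultaneously and monotonically} in every $d_i$, so that one worst-case estimate controls the whole family $\{M_D:d\in[0,1]^n\}$. A second delicate point is to obtain the constant-row reduction in the modular form $\|M_D^{-1}\|_\infty\le(n-1)\|\bar B^{-1}\|_\infty$ --- equivalently, to re-run the $B$-matrix estimate of \cite{Ga} with the refined inverse bound in place of the diagonally dominant one; this is exactly the step where the improvement of (\ref{lc-bound}) over (\ref{G-bound}) is won.
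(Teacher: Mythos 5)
Your proposal is correct and takes essentially the same route as the paper: although the paper quotes this theorem from \cite{Li3} without reproducing its proof, both the proof in \cite{Li3} and the paper's own proof of the analogous Theorem \ref{GL-mian} proceed exactly as you do --- split $M_D=(I-D+DB^{+})+DC$, obtain the factor $n-1$ from the argument of \cite{Ga}, bound $\|(I-D+DB^{+})^{-1}\|_\infty$ by an inverse-norm estimate for SDD $M$-matrices (your $\bar\gamma_i$-bound is the $l_k$-based counterpart of Lemma \ref{lem1}), and then strip out the dependence on $D$ via the monotonicity estimates of Lemmas \ref{lem2} and \ref{lem3}, i.e., $l_k(\bar B)\leqslant l_k(B^{+})$, $\bar\gamma_i\geqslant\min\{\bar\beta_i,1\}$ and $\bar b_{jj}/\bar\gamma_j\leqslant b_{jj}/\bar\beta_j$. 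The only step you leave as a sketch --- the $\bar\gamma_i$ inverse bound itself --- is precisely the known lemma that \cite{Li3} invokes (the role played here by the cited Lemma \ref{lem1}), so your outline contains no gap in substance.
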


In this paper, we further improve error bounds on the LCP$(M,q)$
when $M$ belongs to $B$-matrices. The rest of this paper is
organized as follows: In Section 2 we present a new error bound for
(\ref{e_b}), and then prove that this bound are better than those in
Theorems 1 and 2. In Section 3, some numerical examples are given to
illustrate our theoretical results obtained.

\section{A new error bound for the  LCP$(M, q)$ of $B$-matrices}
In this section, an upper bound for (\ref{e_b}) is provided when $M$
is a $B$-matrix. Firstly, some definitions, notation and lemmas
which will be used later are given as follows.

A matrix $A=[a_{ij}]\in C^{n,n}$ is called a strictly diagonally
dominant ($SDD$) matrix if $|a_{ii}|>\sum\limits_{j\neq
i}^{n}|b_{ij}|$ for all $i=1,2,\ldots,n$. A matrix $A=[a_{ij}]$ is
called a nonsingular $M$-matrix if its inverse is nonnegative and
all its off-diagonal entries are nonpositive \cite{Ber}. In
\cite{Jp} it was proved that a $B$-matrix has positive diagonal
elements, and a real matrix $A$ is a $B$-matrix if and only if it
can be written in form (\ref{eq3}) with $B^{+}$ being a $SDD$
matrix. Given a matrix $A=[a_{ij}]\in C^{n,n}$, let
\begin{eqnarray}\label{eq7}w_{ij}(A)&=&\frac{|a_{ij}|}{|a_{ii}|-\sum\limits_{k=j+1,\atop k\neq i}^{n}|a_{ik}|},~
i\neq j,\nonumber\\
w_i(A)&=&\max\limits_{j\neq i}\{w_{ij}\},\\
m_{ij}(A)&=&\frac{|a_{ij}|+\sum\limits_{ k=j+1,\atop k\neq
i}^{n}|a_{ik}|w_k}{|a_{ii}|}, ~i\neq j.\nonumber
\end{eqnarray}

\begin{lemma}\emph{\cite[Theorem 14]{Yang}} \label{lem1} Let $A=[a_{ij}]$
be an $n\times n$ row strictly diagonally dominant $M$-matrix. Then
\[||A^{-1}||_{\infty}\leqslant \sum\limits_{i=1}^{n}\left
(\frac{1}{a_{ii}-\sum\limits_{k=i+1}^{n}|a_{ik}|m_{ki}(A)}\prod\limits_{j=1}^{i-1}\frac{1}{1-u_j(A)l_j(A)}\right),
\]
where $u_i(A)=\frac{1}{|a_{ii}|}\sum\limits_{j=i+1}^{n}|a_{ij}|$,
$l_k(A)=\max\limits_{k\leq i\leq
n}\left\{\frac{1}{|a_{ii}|}\sum\limits_{j=k,\atop j\neq
i}^{n}|a_{ij}|\right\}$,
$\prod\limits_{j=1}^{i-1}\frac{1}{1-u_j(A)l_j(A)}=1$ if $i=1$, and
$m_{ki}(A)$ is defined as in (\ref{eq7}).
\end{lemma}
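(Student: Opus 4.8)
Since the bound in Lemma \ref{lem1} is quoted verbatim from \cite[Theorem 14]{Yang}, within the present paper it may simply be invoked; what follows is how I would establish it from scratch. The starting point is the observation that a row $SDD$ $M$-matrix $A$ satisfies $A^{-1}\geqslant 0$ (the defining property of a nonsingular $M$-matrix recalled above), so its maximum absolute row sum is attained at the all-ones vector $e=(1,\ldots,1)^{T}$; that is, $\|A^{-1}\|_{\infty}=\|A^{-1}e\|_{\infty}=\max_{1\leqslant i\leqslant n}x_{i}$, where $x=A^{-1}e\geqslant 0$ solves $Ax=e$. Writing the $i$-th equation as $a_{ii}x_{i}=1+\sum_{j\neq i}|a_{ij}|x_{j}$ (using $a_{ij}\leqslant 0$ for $j\neq i$, so $-a_{ij}=|a_{ij}|$) reduces the whole task to producing a sharp upper estimate for the largest component of the nonnegative solution of this coupled system in terms of the diagonal-dominance data of $A$.

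My plan is to argue by induction on $n$, eliminating the first variable: write $A$ in block form with leading entry $a_{11}$, first row $a_{1}^{T}$, first column $c_{1}$, and trailing $(n-1)\times(n-1)$ principal block $A_{2}$, which is again a row $SDD$ $M$-matrix so the induction hypothesis applies to it. The quantities $u_{1}(A)$ and $l_{1}(A)$ measure the total upper off-diagonal weight of row $1$ and the heaviest trailing row-sum ratio, respectively, and estimating the back-substitution step $x_{1}=\frac{1}{a_{11}}\bigl(1+\sum_{k>1}|a_{1k}|x_{k}\bigr)$ against the inductive bounds on $x_{2},\ldots,x_{n}$ is what produces the contraction factor $\frac{1}{1-u_{1}(A)l_{1}(A)}$; iterating the elimination yields the telescoping product $\prod_{j=1}^{i-1}\frac{1}{1-u_{j}(A)l_{j}(A)}$. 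The leading denominator $a_{ii}-\sum_{k=i+1}^{n}|a_{ik}|m_{ki}(A)$ arises from a finer accounting: the auxiliary ratios $w_{ij}(A)$ and $w_{i}(A)$ of (\ref{eq7}) bound the subdiagonal multipliers generated in the elimination, and feeding these into $m_{ki}(A)$ sharpens the crude diagonal term $a_{ii}$ into the stated, smaller one.

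The main obstacle is twofold. First, one must certify that every reduction step is legitimate: the trailing principal submatrices have to remain $SDD$ $M$-matrices for the induction to close, and the denominators $1-u_{j}(A)l_{j}(A)$ together with $a_{ii}-\sum_{k>i}|a_{ik}|m_{ki}(A)$ must stay strictly positive, which requires deducing $u_{j}(A)l_{j}(A)<1$ from the $SDD$ hypothesis. Second, and more delicate, is the bookkeeping that tracks how the ratios $w_{k}(A)$, $m_{ki}(A)$ and the sums $u_{j}(A)$, $l_{j}(A)$ transform under successive Schur complementation, so that the product and the refined diagonal term emerge in precisely the claimed form rather than a weaker estimate. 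Controlling these transformations, rather than the elimination idea itself, is where the real effort lies.
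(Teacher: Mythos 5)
The paper contains no proof of Lemma \ref{lem1} at all---it is imported verbatim from \cite[Theorem 14]{Yang}---so your opening sentence (simply invoke the reference) is precisely the paper's own treatment, and in that sense your proposal matches it. The from-scratch sketch you append is a reasonable plan but not a proof: the closure of the induction (that trailing principal submatrices stay $SDD$ $M$-matrices), the strict positivity of the denominators $1-u_j(A)l_j(A)$ and $a_{ii}-\sum_{k=i+1}^{n}|a_{ik}|m_{ki}(A)$, and above all the bookkeeping under successive elimination that makes exactly the factors $m_{ki}(A)$ and $\frac{1}{1-u_j(A)l_j(A)}$ emerge are named as obstacles rather than overcome (as you yourself concede), so it should not be mistaken for an independent verification of the cited result.
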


\begin{lemma} \cite[Lemma 3]{Li1} \label{lem2}
Let $\gamma > 0$ and $ \eta \geqslant 0 $. Then for any $x\in
[0,1]$, \[ \frac{1}{1-x+\gamma x} \leqslant
\frac{1}{\min\{\gamma,1\}}\] and \[  \frac{\eta x}{1-x+\gamma x}
\leqslant  \frac{\eta }{\gamma}.\]
\end{lemma}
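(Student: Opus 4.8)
The plan is to treat the two inequalities separately, exploiting that the common denominator $1-x+\gamma x = 1+(\gamma-1)x$ is an affine function of $x$ that stays strictly positive on $[0,1]$.

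For the first inequality, I would observe that, being affine in $x$, the denominator attains its extreme values over $[0,1]$ at the endpoints: it equals $1$ at $x=0$ and $\gamma$ at $x=1$. Since $\gamma>0$, both endpoint values are positive, so the denominator is positive throughout $[0,1]$, and its minimum over the interval is exactly $\min\{\gamma,1\}$. Consequently $\frac{1}{1-x+\gamma x}$ is largest precisely when the denominator is smallest, which yields $\frac{1}{1-x+\gamma x}\leqslant \frac{1}{\min\{\gamma,1\}}$ for every $x\in[0,1]$.

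For the second inequality, I would dispose of the case $\eta=0$ at once, since then both sides vanish. For $\eta>0$, rather than estimating the quotient directly, I would compare the two sides by forming their difference: because $\eta\geqslant 0$ and the denominator is positive, it suffices to show $\frac{1}{\gamma}-\frac{x}{1-x+\gamma x}\geqslant 0$. Putting this over a common denominator, its numerator is $(1-x+\gamma x)-\gamma x = 1-x$, which is nonnegative on $[0,1]$; multiplying back by $\eta\geqslant 0$ gives $\frac{\eta x}{1-x+\gamma x}\leqslant \frac{\eta}{\gamma}$.

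I do not expect any genuine obstacle: the statement is elementary, and the only point requiring care is confirming positivity of $1-x+\gamma x$ on the whole interval, so that reciprocating and clearing denominators preserve the inequalities. The affine-monotonicity observation handles this cleanly and also explains why the endpoints $x=0$ (value $1$) and $x=1$ (value $\gamma$) are the only candidates for the minimizing denominator.
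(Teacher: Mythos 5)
Your proof is correct: positivity of the affine denominator $1-x+\gamma x$ on $[0,1]$, with endpoint values $1$ and $\gamma$, gives the first bound, and the identity $(1-x+\gamma x)-\gamma x = 1-x \geqslant 0$ gives the second. Note that the paper itself offers no proof of this lemma --- it is quoted with a citation to [Lemma 3, Li1] --- and your elementary argument is essentially the standard one used there (the usual phrasing bounds the denominator below by $\gamma x$ for $x>0$ rather than forming the difference, but this is the same observation).
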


\begin{lemma} \cite[Lemma 5]{Li2} \label{lem3}
Let $A=[a_{ij}]$ with $a_{ii}>\sum\limits_{j=i+1}^{n}|a_{ij}|$ for
each $i\in N$. Then for any $x_i\in [0,1]$,
\[\frac{1-x_i+a_{ii}x_i}{1-x_i+a_{ii}x_i-\sum\limits_{j=i+1}^{n}|a_{ij}|x_i} \leqslant
 \frac{a_{ii}}{a_{ii}-\sum\limits_{j=i+1}^{n}|a_{ij}|}.\]
\end{lemma}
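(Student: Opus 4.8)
The plan is to fix the index $i$ and reduce the statement to an elementary one-variable inequality. Writing $a=a_{ii}$, $s=\sum_{j=i+1}^{n}|a_{ij}|$ and $x=x_i$, the hypothesis reads $a>s\geqslant 0$ with $x\in[0,1]$. The key simplification is to observe that the numerator is $1-x+ax=1+(a-1)x$ and the denominator is $1-x+ax-sx=1+(a-1-s)x$, both affine in $x$. The claim then becomes $\frac{1+(a-1)x}{1+(a-1-s)x}\leqslant\frac{a}{a-s}$.

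First I would check that the denominator $D(x):=1+(a-1-s)x$ is strictly positive on $[0,1]$: being affine, it suffices to evaluate it at the two endpoints, where it equals $1$ and $a-s$, both positive since $a>s$. This is the one step that must not be skipped, because it is exactly what licenses cross-multiplication without reversing the inequality, and it simultaneously guarantees that the right-hand side $a/(a-s)$ is a finite positive number.

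Given positivity of $D(x)$ and of $a-s$, the claim is equivalent to $a\,D(x)\geqslant (a-s)\,N(x)$, where $N(x):=1+(a-1)x$. I would then form the difference $a\,D(x)-(a-s)\,N(x)$ and expand: the constant terms contribute $a-(a-s)=s$, and the linear-in-$x$ coefficient is expected to collapse to $-s$, so that the whole difference reduces to $s(1-x)$. Since $s\geqslant 0$ and $x\in[0,1]$, this is nonnegative, which closes the argument.

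There is no genuine obstacle here beyond bookkeeping: the only subtlety is the sign control when cross-multiplying, handled by the endpoint check in the first step, together with the algebraic cancellation in the last step. As a sanity check, and as an alternative route that avoids the algebra entirely, one may note that $x\mapsto N(x)/D(x)$ is a M\"obius function of $x$, hence monotone on the interval $[0,1]$ where $D$ keeps constant sign; its maximum is therefore attained at an endpoint, and since the values at $x=0$ and $x=1$ are $1$ and $a/(a-s)$ respectively, with $a/(a-s)\geqslant 1$, the bound $a/(a-s)$ follows at once.
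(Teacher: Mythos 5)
Your proof is correct. Note, however, that the paper itself offers no proof of this lemma: it is quoted verbatim from reference [Li2] (Lemma 5 there), so there is no in-paper argument to compare against; your write-up is a self-contained substitute for that citation. The substance checks out: with $a=a_{ii}$, $s=\sum_{j=i+1}^{n}|a_{ij}|$, the hypothesis gives $a>s\geqslant 0$, the affine denominator $D(x)=1+(a-1-s)x$ is positive on $[0,1]$ because $D(0)=1$ and $D(1)=a-s$ are both positive, and the cross-multiplied difference indeed collapses:
\[
a\,D(x)-(a-s)\,N(x)=\bigl(a-(a-s)\bigr)+x\bigl(a(a-1-s)-(a-s)(a-1)\bigr)=s-sx=s(1-x)\geqslant 0 .
\]
Your closing remark via M\"obius monotonicity is also sound; one can make it fully explicit by computing the derivative of $N/D$, whose numerator is $(a-1)D(x)-(a-1-s)N(x)=s\geqslant 0$, so the ratio is nondecreasing on $[0,1]$ and attains its maximum $N(1)/D(1)=a/(a-s)$ at $x=1$. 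Either route is a legitimate proof; the second has the small advantage of explaining \emph{why} the bound is exactly the value at $x_i=1$, which is the structural fact the paper exploits when it applies the lemma in the proof of its Theorem 2.5.
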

Lemmas \ref{lem2} and \ref{lem3} will be used in the proofs of the
following lemma and of Theorem \ref{GL-mian}.

\begin{lemma} \label{lem4}Let $M=[m_{ij}]\in R^{n, n}$
be a $B$-matrix with the form $M=B^{+}+C$, where $B^{+}=[b_{ij}]$ is
the matrix of (\ref{eq4}). And let
$B_{D}^{+}=I-D+DB^{+}=[\tilde{b}_{ij}]$ where $D=diag(d_i)$ with
$0\leqslant d_i \leqslant 1$. Then \[ w_i(B_{D}^{+})\leqslant
\max\limits_{j\neq
i}\left\{\frac{|b_{ij}|}{b_{ii}-\sum\limits_{k=j+1,\atop k\neq
i}^{n}|b_{ik}|}\right\}\] and \[ m_{ij}(B_{D}^{+})\leqslant
\tilde{m}_{ij}(B^{+}),\] where $w_i(B_{D}^{+})$, $m_{ij}(B_{D}^{+})$
are defined as in (\ref{eq7}), and
\[
\tilde{m}_{ij}(B^{+})=\frac{1}{b_{ii}}\left(|b_{ij}|+\sum\limits_{k=j+1,\atop
k\neq i}^{n}\left(|b_{ik}|\cdot\max\limits_{h\neq
k}\left\{\frac{|b_{kh}|}{b_{kk}-\sum\limits_{l=h+1,\atop l\neq
k}^{n}|b_{kl}|}\right\}\right)\right).\]
\end{lemma}

\begin{proof}
Note that
\[[B_{D}^{+}]_{ij}=\tilde{b}_{ij}=\left\{ \begin{array}{cc}
  1-d_i+d_ib_{ij},  &i=j,\\
 d_ib_{ij},  &i\neq j.
\end{array} \right.\]
Since $B^{+}$ is $SDD$, $b_{ii}-\sum\limits_{k=j+1,\atop k\neq
i}^{n}|b_{ik}|> 0$. Hence, by Lemma \ref{lem2} and (\ref{eq7}), it
follows that
\begin{eqnarray} \label{eq8} w_i(B_{D}^{+})=\max\limits_{j\neq
i}\left\{w_{ij}(B_{D}^{+})\right\} &=&\max\limits_{j\neq
i}\left\{\frac{|b_{ij}|d_i}{1-d_i+b_{ii}d_i-\sum\limits_{k=j+1,\atop
k\neq
i}^{n}|b_{ik}|d_i}\right\}\nonumber\\
&\leqslant&\max\limits_{j\neq
i}\left\{\frac{|b_{ij}|}{b_{ii}-\sum\limits_{k=j+1,\atop k\neq
i}^{n}|b_{ik}|}\right\}.\end{eqnarray} Furthermore, it follows from
(\ref{eq7}), (\ref{eq8}) and Lemma \ref{lem2} that for each $i\neq
j~(j<i\leqslant n)$
\begin{eqnarray*}  m_{ij}(B_{D}^{+})&=&\frac{|b_{ij}|\cdot
d_i+\sum\limits_{k=j+1,\atop k\neq i}^{n}|b_{ik}|\cdot d_i\cdot
w_k(B_{D}^{+})}{1-d_i+b_{ii}\cdot
d_i}\nonumber\\
&\leqslant&
\frac{1}{b_{ii}}\cdot\left(|b_{ij}|+\sum\limits_{k=j+1,\atop k\neq
i}^{n}|b_{ik}|\cdot w_k(B_{D}^{+})\right)\nonumber \\
&\leqslant&\frac{1}{b_{ii}}\left(|b_{ij}|+\sum\limits_{k=j+1,\atop
k\neq i}^{n}\left(|b_{ik}|\cdot\max\limits_{h\neq
k}\left\{\frac{|b_{kh}|}{b_{kk}-\sum\limits_{l=h+1,\atop l\neq
k}^{n}|b_{kl}|}\right\}\right)\right)\nonumber\\
&=&\tilde{m}_{ij}(B^{+}).\end{eqnarray*} The proof is completed.
\end{proof}

By Lemmas \ref{lem1}, \ref{lem2}, \ref{lem3} and \ref{lem4}, we give
the following bound for (\ref{e_b}) when $M$ is a $B$-matrix.

\begin{theorem} \label{GL-mian} Let $M=[m_{ij}]\in R^{n, n}$ be a $B$-matrix with the form $M=B^{+}+C$, where $B^{+}=[b_{ij}]$ is the matrix of
(\ref{eq4}). Then
\begin{eqnarray} \label{Gl-bound}\max\limits_{d\in [0,1]^n}||(I-D+DM)^{-1}||_{\infty} \leqslant
\sum\limits_{i=1}^{n}\frac{n-1}{\min\{\widehat{\beta}_i,1\}}\prod\limits_{j=1}^{i-1}\frac{b_{jj}}{\bar{\beta}_j},
\end{eqnarray}
where
$\widehat{\beta}_i=b_{ii}-\sum\limits_{k=i+1}^{n}|b_{ik}|\cdot\tilde{m}_{ki}(B^{+})$
with $\tilde{m}_{ki}(B^{+})$ is defined in lemma \ref{lem4},
$\bar{\beta}_i$ is defined in Theorem \ref{L-C}, and
$\prod\limits_{j=1}^{i-1}\frac{b_{jj}}{\bar{\beta}_j}=1$ if $i=1$.
\end{theorem}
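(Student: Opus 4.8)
The plan is to fix an arbitrary $d\in[0,1]^n$, to bound $\|(I-D+DM)^{-1}\|_\infty$ by a quantity that no longer involves $d$, and only at the very end to pass to $\max_{d\in[0,1]^n}$. First I would split $I-D+DM=(I-D+DB^{+})+DC=B_{D}^{+}+DC$, where $B_{D}^{+}=[\tilde b_{ij}]$ is the matrix of Lemma \ref{lem4} (so $\tilde b_{ii}=1-d_i+d_ib_{ii}$ and $|\tilde b_{ik}|=d_i|b_{ik}|$) and $DC$ has constant nonnegative rows, i.e.\ is of the rank-one form $c\mathbf{1}^{T}$ with $c=(d_1r_1^{+},\dots,d_nr_n^{+})^{T}\ge 0$. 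Because $B^{+}$ is an $SDD$ $Z$-matrix with positive diagonal, so is $B_{D}^{+}$; hence $B_{D}^{+}$ is a nonsingular $M$-matrix and Lemma \ref{lem1} is applicable to it. Exactly as in the mechanism underlying Theorem \ref{G-B}, the rank-one constant-row structure of $DC$ over the $M$-matrix $B_{D}^{+}$ reduces the estimate to $B_{D}^{+}$ itself, giving $\|(I-D+DM)^{-1}\|_\infty\le(n-1)\,\|(B_{D}^{+})^{-1}\|_\infty$; this is the step that produces the factor $n-1$.

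Next I would apply Lemma \ref{lem1} to $B_{D}^{+}$ and bound its sum term by term, clearing the dependence on $d$ in each factor. For the $i$-th ``main'' denominator I would insert Lemma \ref{lem4} ($m_{ki}(B_{D}^{+})\le\tilde m_{ki}(B^{+})$) to obtain $\tilde b_{ii}-\sum_{k=i+1}^{n}|\tilde b_{ik}|\,m_{ki}(B_{D}^{+})\ge 1-d_i+d_i\widehat\beta_i$, and then Lemma \ref{lem2} (first inequality, with $\gamma=\widehat\beta_i$) to get $1/(1-d_i+d_i\widehat\beta_i)\le 1/\min\{\widehat\beta_i,1\}$. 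For each product factor $1/\bigl(1-u_j(B_{D}^{+})l_j(B_{D}^{+})\bigr)$ I would first use Lemma \ref{lem2} (second inequality, with $\eta=\sum_{k=j,k\neq i}^{n}|b_{ik}|$, $\gamma=b_{ii}$) on every candidate of the maximum defining $l_j$, obtaining $l_j(B_{D}^{+})\le l_j(B^{+})$, so the factor is at most $1/\bigl(1-u_j(B_{D}^{+})l_j(B^{+})\bigr)$. Substituting $u_j(B_{D}^{+})=d_j\sum_{k=j+1}^{n}|b_{jk}|/(1-d_j+d_jb_{jj})$ rewrites this as $(1-d_j+d_jb_{jj})/\big(1-d_j+d_jb_{jj}-l_j(B^{+})d_j\sum_{k=j+1}^{n}|b_{jk}|\big)$, which is precisely the form to which Lemma \ref{lem3} applies (with effective off-diagonal mass $l_j(B^{+})\sum_{k=j+1}^{n}|b_{jk}|$), and it bounds the factor by $b_{jj}/\bar\beta_j$. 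Multiplying, the $i$-th summand is at most $\frac{1}{\min\{\widehat\beta_i,1\}}\prod_{j=1}^{i-1}\frac{b_{jj}}{\bar\beta_j}$; summing over $i$, restoring the factor $n-1$, and noting the resulting bound is free of $d$, the passage to $\max_{d\in[0,1]^n}$ gives \eqref{Gl-bound}.

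The main obstacle is the structural reduction $\|(I-D+DM)^{-1}\|_\infty\le(n-1)\|(B_{D}^{+})^{-1}\|_\infty$: unlike the rest of the argument it is not a termwise inequality but genuinely exploits that $DC$ is a nonnegative rank-one constant-row perturbation of the $M$-matrix $B_{D}^{+}$, and it is exactly the heart of the Garc\'ia-Esnaola--Pe\~na estimate (Theorem \ref{G-B}) which the present improvement keeps intact while sharpening the inner $M$-matrix bound through Lemma \ref{lem1}. A secondary point to discharge before the termwise steps are valid is positivity of all denominators: one needs $B_{D}^{+}$ to be $SDD$ (so Lemma \ref{lem1} applies) and $\widehat\beta_i>0$, $\bar\beta_j>0$ (so that the hypotheses $\gamma>0$ of Lemma \ref{lem2} and $a_{ii}>\sum_{j>i}|a_{ij}|$ of Lemma \ref{lem3} hold). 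The latter follow from $B^{+}$ being $SDD$: each $l_j(B^{+})<1$ gives $l_j(B^{+})\sum_{k>j}|b_{jk}|<\sum_{k>j}|b_{jk}|<b_{jj}$, hence $\bar\beta_j>0$; and since every $w_l(B^{+})<1$ one has $\tilde m_{ki}(B^{+})\le\frac{1}{b_{kk}}\sum_{l=i,\,l\neq k}^{n}|b_{kl}|\le l_i(B^{+})$ (take $h=k$ in the maximum defining $l_i$), whence $\widehat\beta_i\ge\bar\beta_i>0$, which simultaneously positions the new main term to compare favourably with Theorem \ref{L-C}.
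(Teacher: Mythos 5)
Your proposal is correct and follows essentially the same route as the paper's own proof: the same splitting $I-D+DM=B_{D}^{+}+DC$ with the Garc\'ia-Esnaola--Pe\~na reduction to $(n-1)\,\|(B_{D}^{+})^{-1}\|_\infty$, followed by Lemma \ref{lem1} applied to $B_{D}^{+}$ and the same termwise elimination of $d$ via Lemmas \ref{lem2}, \ref{lem3} and \ref{lem4}, yielding the factors $1/\min\{\widehat{\beta}_i,1\}$ and $b_{jj}/\bar{\beta}_j$. The only (harmless) differences are the order in which Lemmas \ref{lem2} and \ref{lem4} are invoked on the main denominator, and your explicit verification that $\bar{\beta}_j>0$ and $\widehat{\beta}_i>0$, which the paper leaves implicit.
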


\begin{proof} Let $M_D=I-D+DM$. Then
\[M_D=I-D+DM=I-D+D(B^{+}+C)=B_D^{+}+C_D,\] where $B_D^{+}=I-D+DB^{+}=[\tilde{b}_{ij}]$ and $C_D=DC$.
Similarly to the proof of Theorem 2.2 in \cite{Ga}, we can obtain
that $B_D^{+}$ is an $SDD$ $M$-matrix with positive diagonal
elements and that
\begin{equation}\label{eq10}
||M_D^{-1}||_\infty \leqslant ||\big(I + (B^+_
D)^{-1}\tilde{C}_D\big)^{-1} ||_\infty ||(B^+_D )^{-1} ||_\infty
\leqslant (n-1)  ||(B^+_D )^{-1} ||_\infty.
\end{equation}

Next, we give an upper bound for $ ||(B^+_D )^{-1} ||_\infty$. By
Lemma \ref{lem1}, we have \[ ||(B^+_D )^{-1} ||_\infty \leqslant
\sum\limits_{i=1}^{n}\left
(\frac{1}{1-d_i+b_{ii}d_i-\sum\limits_{k=i+1}^{n}|b_{ik}|\cdot
d_i\cdot
m_{ki}(B^+_D)}\prod\limits_{j=1}^{i-1}\frac{1}{1-u_j(B^+_D)l_j(B^+_D)}\right).\]
where
\[
u_j(B^+_D)=\frac{\sum\limits_{k=j+1}^{n}|b_{jk}|d_j}{1-d_j+b_{jj}d_j},~
l_k(B^+_D)=\max\limits_{k\leq i\leq
n}\left\{\frac{\sum\limits_{j=k,\atop j\neq
i}^{n}|b_{ij}|d_i}{1-d_i+b_{ii}d_i}\right\}, \] and
\begin{eqnarray*}
m_{ki}(B^+_D)=\frac{|b_{ki}|\cdot d_k+\sum\limits_{l=i+1,\atop l\neq
k}^{n}|b_{kl}|\cdot d_k\cdot w_l(B_{D}^{+})}{1-d_k+b_{kk}\cdot d_k}
\end{eqnarray*}
with $w_l(B_{D}^{+})=\max\limits_{h\neq
l}\left\{\frac{|b_{lh}|d_l}{1-d_l+b_{ll}d_l-\sum\limits_{s=h+1,\atop
s\neq l}^{n}|b_{ls}|d_l}\right\}$.

By Lemmas \ref{lem2} and \ref{lem4}, we can easily get that for each
$i\in N$,
\begin{eqnarray}\label{eq11}
\frac{1}{1-d_i+b_{ii}d_i-\sum\limits_{k=i+1}^{n}|b_{ik}|\cdot
d_i\cdot m_{ki}(B^+_D)}&\leqslant&
\frac{1}{\min\left\{b_{ii}-\sum\limits_{k=i+1}^{n}|b_{ik}|\cdot
m_{ki}(B^+_D),1\right\}}\nonumber\\&\leqslant&
\frac{1}{\min\left\{b_{ii}-\sum\limits_{k=i+1}^{n}|b_{ik}|\cdot
\tilde{m}_{ki}(B^+),1\right\}}\nonumber\\&=&\frac{1}{\min\left\{\widehat{\beta}_i,1\right\}},
\end{eqnarray}
and that for each $k\in N$,
\begin{eqnarray}\label{eq12}
l_k(B^+_D)=\max\limits_{k\leq i\leq
n}\left\{\frac{\sum\limits_{j=k,\atop j\neq
i}^{n}|b_{ij}|d_i}{1-d_i+b_{ii}d_i}\right\}\leqslant
\max\limits_{k\leq i\leq
n}\left\{\frac{1}{b_{ii}}\sum\limits_{j=k,\atop j\neq
i}^{n}|b_{ij}|\right\}=l_k(B^+)<1.
\end{eqnarray}
Furthermore, according to Lemma \ref{lem3} and (\ref{eq12}), it
follows that for each $j\in N$,
\begin{eqnarray}\label{eq13}
\frac{1}{1-u_j(B^+_D)l_j(B^+_D)}=\frac{1-d_j+b_{jj}d_j}{1-d_j+b_{jj}d_j-\sum\limits_{k=j+1}^{n}|b_{jk}|\cdot
d_j\cdot l_j(B^+_D)}\leqslant\frac{b_{jj}}{\bar{\beta}_j}.
\end{eqnarray}
By (\ref{eq11}) and (\ref{eq13}), we have
\begin{eqnarray}\label{eq14}
||(B^+_D )^{-1} ||_\infty \leqslant
\frac{1}{\min\left\{\widehat{\beta}_1,1\right\}}+\sum\limits_{i=2}^{n}\left
(\frac{1}{\min\left\{\widehat{\beta}_i,1\right\}}\prod\limits_{j=1}^{i-1}\frac{b_{jj}}{\bar{\beta}_j}\right).
\end{eqnarray}
The conclusion follows from (\ref{eq10}) and (\ref{eq14}).
\end{proof}

The comparisons of the bounds in Theorems \ref{L-C} and
\ref{GL-mian} are established as follows.
\begin{theorem} \label{LG-mian2} Let $M=[m_{ij}]\in R^{n, n}$ be
a $B$-matrix with the form $M=B^{+}+C$, where $B^{+}=[b_{ij}]$ is
the matrix of (\ref{eq4}). Let $\bar{\beta}_i$ and
$\widehat{\beta}_i$ be defined in Theorems \ref{L-C} and
\ref{GL-mian}, respectively. Then
\begin{eqnarray*} \sum\limits_{i=1}^{n}\frac{n-1}{\min\{\widehat{\beta}_i,1\}}\prod\limits_{j=1}^{i-1}\frac{b_{jj}}{\bar{\beta}_j} \leqslant
\sum\limits_{i=1}^{n}\frac{n-1}{\min\{\bar{\beta}_i,1\}}\prod\limits_{j=1}^{i-1}\frac{b_{jj}}{\bar{\beta}_j}.
\end{eqnarray*}
\end{theorem}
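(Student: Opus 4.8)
The plan is to reduce the stated inequality between the two sums to a termwise comparison. Both sides carry the identical nonnegative weights $\prod_{j=1}^{i-1}\frac{b_{jj}}{\bar{\beta}_j}$ and the common factor $n-1$, so it suffices to establish, for each fixed $i\in N$, that
\[
\frac{1}{\min\{\widehat{\beta}_i,1\}}\leqslant\frac{1}{\min\{\bar{\beta}_i,1\}}.
\]
Since $\bar{\beta}_i$ and $\widehat{\beta}_i$ are both positive (as in Theorems \ref{L-C} and \ref{GL-mian}, using that $B^{+}$ is $SDD$) and the map $t\mapsto 1/\min\{t,1\}$ is non-increasing on $(0,\infty)$, this reciprocal inequality is equivalent to $\widehat{\beta}_i\geqslant\bar{\beta}_i$. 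Comparing $\bar{\beta}_i=b_{ii}-\sum_{k=i+1}^{n}|b_{ik}|\,l_i(B^{+})$ with $\widehat{\beta}_i=b_{ii}-\sum_{k=i+1}^{n}|b_{ik}|\,\tilde{m}_{ki}(B^{+})$, the difference $\widehat{\beta}_i\geqslant\bar{\beta}_i$ follows at once from the termwise bound $\tilde{m}_{ki}(B^{+})\leqslant l_i(B^{+})$ valid for every $k$ with $i+1\leqslant k\leqslant n$, since the coefficients $|b_{ik}|$ are nonnegative.

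To prove $\tilde{m}_{ki}(B^{+})\leqslant l_i(B^{+})$, I would first bound the inner maximand appearing in $\tilde{m}_{ki}(B^{+})$ by $1$, using the strict diagonal dominance of $B^{+}$. For each $l$ and each $h\neq l$, the $SDD$ property gives $|b_{lh}|+\sum_{s=h+1,\,s\neq l}^{n}|b_{ls}|\leqslant\sum_{s\neq l}|b_{ls}|<b_{ll}$, whence
\[
\max\limits_{h\neq l}\left\{\frac{|b_{lh}|}{b_{ll}-\sum\limits_{s=h+1,\,s\neq l}^{n}|b_{ls}|}\right\}<1.
\]
Substituting this bound into the definition of $\tilde{m}_{ki}(B^{+})$ from Lemma \ref{lem4}, and recalling that $k\geqslant i+1>i$ so that the singled-out index $i$ together with the summation indices $l\in\{i+1,\dots,n\}\setminus\{k\}$ form exactly $\{i,\dots,n\}\setminus\{k\}$, yields
\[
\tilde{m}_{ki}(B^{+})\leqslant\frac{1}{b_{kk}}\left(|b_{ki}|+\sum\limits_{l=i+1,\,l\neq k}^{n}|b_{kl}|\right)=\frac{1}{b_{kk}}\sum\limits_{j=i,\,j\neq k}^{n}|b_{kj}|.
\]

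The right-hand side is precisely the candidate indexed by $p=k$ in the maximum defining $l_i(B^{+})=\max_{i\leqslant p\leqslant n}\{\frac{1}{b_{pp}}\sum_{j=i,\,j\neq p}^{n}|b_{pj}|\}$, and since $i\leqslant k\leqslant n$ this candidate is dominated by the maximum, giving $\tilde{m}_{ki}(B^{+})\leqslant l_i(B^{+})$. Assembling the three reductions then completes the proof. I expect the only delicate point to be the index bookkeeping in the last display: one must verify that reinstating the term $|b_{ki}|$ and widening the summation range from $\{i+1,\dots,n\}\setminus\{k\}$ to $\{i,\dots,n\}\setminus\{k\}$ reproduces exactly the $k$-th entry of the maximum defining $l_i(B^{+})$, which hinges on the fact that $k>i$ forces $k\neq i$ and hence keeps the omitted index consistent on both sides.
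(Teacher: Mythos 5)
Your proof is correct and takes essentially the same route as the paper's: both reduce the claim to the termwise comparison $\widehat{\beta}_i\geqslant\bar{\beta}_i$, which in turn follows from $\tilde{m}_{ki}(B^{+})\leqslant l_i(B^{+})$ by bounding the inner maximum in $\tilde{m}_{ki}(B^{+})$ by $1$ via the strict diagonal dominance of $B^{+}$ and then recognizing the resulting row sum as one candidate in the maximum defining $l_i(B^{+})$. If anything, your consistent use of non-strict inequalities is slightly more careful than the paper's strict ones, which technically fail in the degenerate case where row $k$ has no nonzero off-diagonal entries in the relevant range (there equality holds), though this does not affect the non-strict conclusion.
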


\begin{proof}
Note that
\[\bar{\beta}_i=b_{ii}-\sum\limits_{j=i+1}^{n}|b_{ij}|l_i(B^{+}), \widehat{\beta}_i=b_{ii}-\sum\limits_{k=i+1}^{n}|b_{ik}|\tilde{m}_{ki}(B^{+}),\]
and $B^{+}$ is a $SDD$ matrix, it follows that for each $i\neq
j~(j<i\leqslant n)$
\begin{eqnarray*} \tilde{m}_{ij}(B^{+})&=&\frac{1}{b_{ii}}\left(|b_{ij}|+\sum\limits_{k=j+1,\atop k\neq
i}^{n}\left(|b_{ik}|\cdot\max\limits_{h\neq
k}\left\{\frac{|b_{kh}|}{b_{kk}-\sum\limits_{l=h+1,\atop l\neq
k}^{n}|b_{kl}|}\right\}\right)\right)\\&<&
\frac{1}{b_{ii}}\sum\limits_{k=j,\atop k\neq
i}^{n}|b_{ik}|\\&\leqslant& \max\limits_{j\leqslant i\leqslant
n}\left\{\frac{1}{b_{ii}}\sum\limits_{k=j,\atop k\neq
i}^{n}|b_{ik}|\right\}=l_j(B^{+}).
\end{eqnarray*}
Hence, for each $i\in N$
 \[\widehat{\beta}_i=b_{ii}-\sum\limits_{k=i+1}^{n}|b_{ik}|\tilde{m}_{ki}(B^{+})>b_{ii}-\sum\limits_{k=i+1}^{n}|b_{ik}|l_i(B^{+})=\bar{\beta}_i,\]
 which implies that
\[\frac{1}{\min\{\widehat{\beta}_i,1\}}\leqslant
\frac{1}{\min\{\bar{\beta}_i,1\}}.\] This completes the proof.
\end{proof}

Remark here that when $\bar{\beta}_i<1$ for all $i\in N$, then
\begin{eqnarray*}\frac{1}{\min\{\widehat{\beta}_i,1\}}<
\frac{1}{\min\{\bar{\beta}_i,1\}},\end{eqnarray*} which yields that
\begin{eqnarray*} \sum\limits_{i=1}^{n}\frac{n-1}{\min\{\widehat{\beta}_i,1\}}\prod\limits_{j=1}^{i-1}\frac{b_{jj}}{\bar{\beta}_j}
<\sum\limits_{i=1}^{n}\frac{n-1}{\min\{\bar{\beta}_i,1\}}\prod\limits_{j=1}^{i-1}\frac{b_{jj}}{\bar{\beta}_j}.
\end{eqnarray*}

Next it is proved that the bound (\ref{Gl-bound}) given in Theorem
\ref{GL-mian} can improve the bound (\ref{G-bound}) in Theorem
\ref{G-B} (Theorem 2.2 in \cite{Ga}) in some cases.

\begin{theorem}\label{th5}
Let $M=[m_{ij}]\in R^{n, n}$ be a $B$-matrix with the form
$M=B^{+}+C$, where $B^{+}=[b_{ij}]$ is the matrix of (\ref{eq4}).
Let $\beta$, $\bar{\beta}_i$ and $\widehat{\beta}_i$ be defined in
Theorems \ref{G-B}, \ref{L-C} and \ref{GL-mian}, respectively, and
let
$\alpha=1+\sum\limits_{i=2}^{n}\prod\limits_{j=1}^{i-1}\frac{b_{jj}}{\bar{\beta}_j}$
and $\widehat{\beta}=\min\limits_{i\in N}\{\widehat{\beta}_i\}$. If
one of the following conditions holds:

(i) $\widehat{\beta}>1$ and $\alpha<\frac{1}{\beta}$;

(ii) $\widehat{\beta}<1$ and $\alpha\beta<\widehat{\beta}$,\\ then
\begin{eqnarray*}
\sum\limits_{i=1}^{n}\frac{n-1}{\min\{\widehat{\beta}_i,1\}}\prod\limits_{j=1}^{i-1}\frac{b_{jj}}{\bar{\beta}_j}<\frac{n-1}{\min\{\beta,1\}}.
\end{eqnarray*}
\end{theorem}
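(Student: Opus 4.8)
The plan is to bound the left-hand sum from above by a single closed-form expression and then compare that expression against $\frac{n-1}{\min\{\beta,1\}}$ under each of the two hypotheses. First I would factor out the common factor $(n-1)$ and write the left-hand side as $(n-1)\sum_{i=1}^{n}\frac{1}{\min\{\widehat{\beta}_i,1\}}\prod_{j=1}^{i-1}\frac{b_{jj}}{\bar{\beta}_j}$. The key observation is that $\widehat{\beta}=\min_{i\in N}\{\widehat{\beta}_i\}$, so $\frac{1}{\min\{\widehat{\beta}_i,1\}}\leqslant\frac{1}{\min\{\widehat{\beta},1\}}$ for every $i$; pulling this out of the sum and recalling the definition $\alpha=1+\sum_{i=2}^{n}\prod_{j=1}^{i-1}\frac{b_{jj}}{\bar{\beta}_j}=\sum_{i=1}^{n}\prod_{j=1}^{i-1}\frac{b_{jj}}{\bar{\beta}_j}$ (using the convention that the empty product equals $1$), I obtain the uniform estimate
\begin{eqnarray*}
\sum\limits_{i=1}^{n}\frac{n-1}{\min\{\widehat{\beta}_i,1\}}\prod\limits_{j=1}^{i-1}\frac{b_{jj}}{\bar{\beta}_j}\leqslant\frac{(n-1)\alpha}{\min\{\widehat{\beta},1\}}.
\end{eqnarray*}

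It then suffices to show $\frac{(n-1)\alpha}{\min\{\widehat{\beta},1\}}<\frac{n-1}{\min\{\beta,1\}}$, i.e. $\alpha\min\{\beta,1\}<\min\{\widehat{\beta},1\}$, under either hypothesis. I would split according to the value of $\widehat{\beta}$. In case (i), $\widehat{\beta}>1$ gives $\min\{\widehat{\beta},1\}=1$, so the target inequality becomes $\alpha\min\{\beta,1\}<1$; since $\min\{\beta,1\}\leqslant\beta$, this follows from the assumed $\alpha<\frac{1}{\beta}$, equivalently $\alpha\beta<1$. In case (ii), $\widehat{\beta}<1$ gives $\min\{\widehat{\beta},1\}=\widehat{\beta}$, so the target becomes $\alpha\min\{\beta,1\}<\widehat{\beta}$, which again follows from $\min\{\beta,1\}\leqslant\beta$ together with the hypothesis $\alpha\beta<\widehat{\beta}$. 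Strictness is preserved throughout because $\alpha\geqslant1$ forces the bounding step to be genuine whenever the hypotheses hold.

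The one point requiring care—and the likely main obstacle—is reconciling the two inner products: the left-hand sum carries weights $\prod_{j=1}^{i-1}\frac{b_{jj}}{\bar{\beta}_j}$ defined with $\bar{\beta}_j$ from Theorem \ref{L-C}, while $\alpha$ is defined with exactly these same products, so the factoring is clean; I must simply verify that the index conventions and the empty-product convention match so that $\sum_{i=1}^{n}\prod_{j=1}^{i-1}\frac{b_{jj}}{\bar{\beta}_j}$ is indeed $\alpha$ rather than $\alpha$ shifted by a term. The comparison $\min\{\beta,1\}\leqslant\beta$ is trivial, but I should note that the replacement of $\min\{\beta,1\}$ by $\beta$ is valid precisely because $\beta>0$ (a $B$-matrix has $B^+$ strictly diagonally dominant, so $\beta_i>0$ and hence $\beta>0$), which keeps all denominators positive and the inequalities well-defined.
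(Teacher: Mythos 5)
Your proposal is correct and follows essentially the same route as the paper: bound each term via $\frac{1}{\min\{\widehat{\beta}_i,1\}}\leqslant\frac{1}{\min\{\widehat{\beta},1\}}$, factor out $\alpha$, then invoke the hypothesis $\alpha<\frac{1}{\beta}$ (resp.\ $\alpha\beta<\widehat{\beta}$) together with $\min\{\beta,1\}\leqslant\beta$; the paper writes out case (i) and dismisses case (ii) as similar, exactly as your unified estimate handles both. The only nitpick is your closing remark attributing strictness to $\alpha\geqslant 1$ — strictness actually comes from the strict hypothesis inequalities, which your case analysis already uses, so the argument stands.
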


\begin{proof}
When $\widehat{\beta}>1$ and $\alpha<\frac{1}{\beta}$, we can easily
get
\begin{eqnarray*}
\sum\limits_{i=1}^{n}\frac{n-1}{\min\{\widehat{\beta}_i,1\}}\prod\limits_{j=1}^{i-1}\frac{b_{jj}}{\bar{\beta}_j}<
\frac{n-1}{\min\{\widehat{\beta},1\}}\sum\limits_{i=1}^{n}\prod\limits_{j=1}^{i-1}\frac{b_{jj}}{\bar{\beta}_j}=
(n-1)\alpha<\frac{n-1}{\beta}\leqslant\frac{n-1}{\min\{\beta,1\}}.
\end{eqnarray*}
Similarly, for $\widehat{\beta}<1$ and
$\alpha\beta<\widehat{\beta}$, the conclusion can be proved
directly.
\end{proof}
\section{Numerical examples}
Two examples are given to show that the bound in Theorem
\ref{GL-mian} is sharper than those in Theorems \ref{G-B} and
\ref{L-C}.
\begin{example} \label{ex1} Consider the family of $B$-matrices in \cite{Li1}:
 \[M_k =
\left[ \begin{array}{cccc}
 1.5 &0.5   &0.4 &0.5 \\
 -0.1  &1.7    &0.7 &0.6   \\
 0.8  &-0.1\frac{k}{k+1}    &1.8 &0.7 \\
 0 & 0.7  &0.8  & 1.8
\end{array} \right],\]
where $k\geqslant 1$. Then $M_k=B_k^{+}+C_k$, where
\[B^+ =
\left[ \begin{array}{cccc}
  1 &0 &-0.1 &0 \\
 -0.8   &1    &0 &-0.1   \\
  0   &-0.1\frac{k}{k+1}-0.8    &1 &-0.1\\
 -0.8 & -0.1 &0  &  1
\end{array}\right].\]
By computations, we have $
\beta=\frac{1}{10(k+1)},\bar{\beta}_1=\bar{\beta}_2=\frac{90k+91}{100k+100},\bar{\beta}_3=0.99,\bar{\beta}_4=1,$
$\hat{\beta}_1=\frac{820k+828}{900k+900},\hat{\beta}_2=0.99,\hat{\beta}_3=1$
and $\hat{\beta}_4=1$. Then it is easy to verify that $M_k$
satisfies the condition (ii) of Theorem \ref{th5}. Hence, by Theorem
\ref{G-B} (Theorem 2.2 in \cite{Ga}), we have
\[\max\limits_{d\in
[0,1]^n}||(I-D+DM)^{-1}||_{\infty}\leqslant
\frac{4-1}{\min\{\beta,1\}}=30(k+1).\] It is obvious that
\[30(k+1)\longrightarrow +\infty, when ~k\longrightarrow +\infty.\]
By Theorem \ref{L-C}, we have that for any $k\geqslant 1$,
\begin{eqnarray*}
&&\max\limits_{d\in [0,1]^n}||(I-D+DM)^{-1}||_{\infty}\\&\leqslant&
3\left(\frac{1}{\bar{\beta}_1}+\frac{1}{\bar{\beta}_2}\cdot\frac{1}{\bar{\beta}_1}+\frac{1}{\bar{\beta}_3}\cdot\frac{1}{\bar{\beta}_1\bar{\beta}_2}
+\frac{1}{\bar{\beta}_1\bar{\beta}_2\bar{\beta}_3}\right)\\&=&3\left(\frac{100k+100}{90k+91}+\frac{(100k+100)^2}{(90k+91)^2}
+\frac{2(100k+100)^2}{0.99(90k+91)^2}\right).
\end{eqnarray*}
By Theorem \ref{GL-mian}, we have that for any $k\geqslant 1$,
\begin{eqnarray*}
&&\max\limits_{d\in [0,1]^n}||(I-D+DM)^{-1}||_{\infty}\\&\leqslant&
3\left(\frac{1}{\hat{\beta}_1}+\frac{1}{\hat{\beta}_2}\cdot\frac{1}{\bar{\beta}_1}+\frac{1}{\bar{\beta}_1\bar{\beta}_2}
+\frac{1}{\bar{\beta}_1\bar{\beta}_2\bar{\beta}_3}\right)\\&=&3\left(\frac{900k+900}{820k+828}+\frac{(100k+100)}{0.99(90k+91)}
+\frac{1.99(100k+100)^2}{0.99(90k+91)^2}\right)\\&<&3\left(\frac{100k+100}{90k+91}+\frac{(100k+100)^2}{(90k+91)^2}
+\frac{2(100k+100)^2}{0.99(90k+91)^2}\right).
\end{eqnarray*}
In particular, when $k=1$,
\[3\left(\frac{900k+900}{820k+828}+\frac{(100k+100)}{0.99(90k+91)}
+\frac{1.99(100k+100)^2}{0.99(90k+91)^2}\right)\approx 13.9878,\]
\[3\left(\frac{100k+100}{90k+91}+\frac{(100k+100)^2}{(90k+91)^2}
+\frac{2(100k+100)^2}{0.99(90k+91)^2}\right)\approx 14.3775,\] and
the bound (\ref{G-bound}) in Theorem \ref{G-B} is
\[\frac{4-1}{\min\{\beta,1\}}=30(k+1)=60.\]
When $k=2$,
\[3\left(\frac{900k+900}{820k+828}+\frac{(100k+100)}{0.99(90k+91)}
+\frac{1.99(100k+100)^2}{0.99(90k+91)^2}\right)\approx 14.0265,\]
\[3\left(\frac{100k+100}{90k+91}+\frac{(100k+100)^2}{(90k+91)^2}
+\frac{2(100k+100)^2}{0.99(90k+91)^2}\right)\approx 14.4246,\] and
the bound (\ref{G-bound}) in Theorem \ref{G-B} is
\[\frac{4-1}{\min\{\beta,1\}}=30(k+1)=90.\]
\end{example}
\begin{example} \label{ex2} Consider the following family of $B$-matrices:
 \[M_k =
\left[ \begin{array}{cc}
 \frac{1}{k} &\frac{-a}{k}   \\
 0  &\frac{1}{k}
\end{array} \right],\]
where $\frac{\sqrt{5}-1}{2}<a<1$ and $\frac{2-a^2}{1+a}<k<1$. Then
$M_k=B_k^{+}+C$ with $C$ is the null matrix. By simple computations,
we can get
\[
\beta=\frac{1-a}{k},\bar{\beta}_1=\frac{1-a^2}{k},\bar{\beta}_2=\frac{1}{k},\hat{\beta}_1=\frac{1}{k}
~and~\hat{\beta}_2=\frac{1}{k}.\] It is not difficult to verify that
$M_k$ satisfies the condition (i) of Theorem \ref{th5}. Thus, the
bound (\ref{lc-bound}) of Theorem \ref{L-C} (Theorem 2.4 in
\cite{Li3}) is
\begin{eqnarray*}
\sum\limits_{i=1}^{n}\frac{n-1}{\min\{\bar{\beta}_i,1\}}\prod\limits_{j=1}^{i-1}\frac{b_{jj}}{\bar{\beta}_j}=
\frac{k+1}{1-a^2},
\end{eqnarray*}
which is larger than the bound
\[\frac{1}{\min\{\beta,1\}}=\frac{k}{1-a}\]
given by (\ref{G-bound}) in Theorem \ref{G-B} (Theorem 2.2 in
\cite{Ga}). However, by Theorem \ref{GL-mian} we can get that
\begin{eqnarray*}
\max\limits_{d\in [0,1]^n}||(I-D+DM)^{-1}||_{\infty}&\leqslant&
\frac{2-a^{2}}{1-a^{2}}.
\end{eqnarray*}
which is smaller than the bound (\ref{G-bound}) in Theorem
\ref{G-B}, i.e.,
\[\frac{2-a^{2}}{1-a^{2}}<\frac{k}{1-a}.\]
In particular, when $a=\frac{4}{5}$ and $k=\frac{8}{9}$, the bounds
in Theorems \ref{G-B} and \ref{L-C} are respectively
\[\frac{1}{\min\{\beta,1\}}=\frac{k}{1-a}=\frac{360}{81}\]
 and \[\sum\limits_{i=1}^{n}\frac{n-1}{\min\{\bar{\beta}_i,1\}}\prod\limits_{j=1}^{i-1}\frac{b_{jj}}{\bar{\beta}_j}=\frac{k+1}{1-a^2}=\frac{425}{81},\]
while the bound (\ref{Gl-bound}) in Theorem \ref{GL-mian} is
\[\sum\limits_{i=1}^{n}\frac{n-1}{\min\{\hat{\beta}_i,1\}}\prod\limits_{j=1}^{i-1}\frac{b_{jj}}{\bar{\beta}_j}=\frac{2-a^{2}}{1-a^{2}}=\frac{306}{81}.\]
These two examples show that the bound in Theorem \ref{GL-mian} is
sharper than those in Theorems \ref{G-B} and \ref{L-C}.
\end{example}


\section*{Acknowledgements}
This work is partly supported by National Natural Science
Foundations of China (11601473 and 31600299), Young Talent fund of
University Association for Science and Technology in Shaanxi, China
(20160234), and CAS 'Light of West China' Program.


\end{document}